\newcommand{\R}{\mathbb{R}}
\newcommand{\sign}{\mathop\mathrm{sign}\nolimits}
\newcommand{\cl}[1]{\overline{#1}}
\newcommand{\D}{\partial}
\newcommand{\X}{\times} 
\renewcommand{\t}[1]{\widetilde{#1}}
\newcommand{\h}[1]{\widehat{#1}}
\newcommand{\su}[1]{\overline{#1}} 
\newcommand{\rank}{\mathop\mathrm{rank}\nolimits}
\newcommand{\im}{\mathop\mathrm{im}\nolimits}
\newcommand{\rkrs}{\mathbb{R}^k\times\mathbb{R}^s}
\newcommand{\rrrn}{\mathbb{R}^r\times\mathbb{R}^{n-r}}
\newcommand{\burkrs}{ BU((-\infty, 0],\mathbb{R}^k\times\mathbb{R}^s)} 
\newcommand{\bum}{BU\big((-\infty, 0], M\big)} 
\newcommand{\buN}{BU\big((-\infty, 0], N\big)}
\newcommand{\bun}{BU\big((-\infty, 0], \R^n\big)}
\newcommand{\ctrkrs}{C_T (\R^k\times\R^s)}
\newcommand{\ctrrrn}{C_T (\R^r\times\R^{n-r})}
\newcommand{\hipH}{\textrm{(\textbf{H})}}
\newcommand{\hipK}{\textrm{(\textbf{K})}}
\newcommand{\xbf}{\boldsymbol{\mathrm{x}}}
\newcommand{\pbf}{\boldsymbol{\mathrm{p}}}
\newtheorem{definition}{Definition}[section]
\newtheorem{theorem}{Theorem}[section]
\newtheorem{corollary}[theorem]{Corollary}
\newtheorem{lemma}[theorem]{Lemma}
\newtheorem{example}[theorem]{Example}
\newtheorem{proposition}[theorem]{Proposition}
\newtheorem{remark}[theorem]{Remark}
\numberwithin{equation}{section}
\title{On a class of differential-algebraic equations with infinite delay}
\author{Luca Bisconti}
\author{Marco Spadini}
\address[L.\ Bisconti, M.\ Spadini]{Dipartimento di Sistemi e Informatica, Universit\`a di 
Firenze, Via Santa Marta 3, 50139 Firenze, Italy}
\date{\today}
\begin{document}

\begin{abstract}
We study the set of $T$-periodic solutions of a class of $T$-periodically perturbed 
Differential-Algebraic Equations, allowing the perturbation to contain a distributed and possibly 
infinite delay. Under suitable assumptions, the perturbed equations are equivalent to Retarded 
Functional (Ordinary) Differential Equations on a manifold. Our study is based on known results 
about the latter class of equations and on a ``reduction'' formula for the degree of a tangent 
vector field to implicitly defined differentiable manifolds.
\end{abstract}

\maketitle
\section {Introduction}
This paper is devoted to the study of some properties of the set of harmonic solutions to
retarded functional periodic perturbations of Differential-Algebraic Equations (DAEs) of
a particular type. The results we obtain are mainly related, on one hand, with those of 
\cite{fupespa2} concerning the method used to deal with distributed and possibly 
infinite delay and, on the other hand, with \cite{cala,spaDAE} as regards the treatment of
DAEs.

Roughly speaking, our strategy consists of reducing the perturbed DAEs that we consider to
Retarded Functional Differential Equations (RFDEs) on an implicitly-defined differentiable
manifold to which we apply the results of \cite{fupespa2}. This approach, as it is, involves 
the computation of the topological degree of a possibly complicated tangent vector field. 
This potential awkwardness is taken care of by the means of a formula of \cite[Th.\ 4.1]{spaDAE} 
(Equation \eqref{valasso} below) that allows us to replace this computation with the
more straightforward one of (essentially) the Brouwer degree of a map constructed explicitly
out of the equation.

Let $g\colon\R^k\times\R^s\to\R^s$ and $f\colon\R^k\times\R^s\to\R^k$ be given. Assume
$f$ continuous and $g\in C^{\infty}(\rkrs, \R\sp{s})$ has the property that $\D_2 g (p,q)$, 
the partial derivative of $g$ with respect to the second variable, is invertible for any 
$(p,q)\in\R^k\times\R^s\cong\R^n$. We consider the following DAE in semi-explicit form:
\begin{equation} \label{RFDAEs:0} 
   \left\{ \begin{array}{l}
                  \dot x = f(x, y), \\
                  g(x, y) = 0, \\
    \end{array} \right.
\end{equation}
and perturb it as follows:
\begin{equation} \label{RFDAEs:01} \left\{ \begin{array}{l}
      \dot  x(t) = f\big(x(t), y(t)\big) + \lambda h(t, x_t, y_t),\quad \lambda\geq 0,\\
      g\big(x(t), y(t)\big)=0, \\
    \end{array} \right.
\end{equation}
where $h\colon\R\X\burkrs\to\R^k$ is continuous and $T$-periodic, $T>0$ given, in the first 
variable. Here $\burkrs$ denotes the space of bounded uniformly continuous $(\rkrs)$-valued 
maps of $(-\infty,0]$. Here, as we will do for the remainder of the paper, we have used the 
following notation: let $\zeta\colon I\to\R^d$ be a function with $I\subseteq\R$ an interval 
such that $\inf I=-\infty$, and let $t\in I$. By $\zeta_t\colon(-\infty,0]\to\R^d$, $d>0$, 
we mean the function defined by $\theta\mapsto\zeta_t(\theta)=\zeta(t +\theta)$. According 
to this notation, $(x_t,y_t)$ is a map of $(-\infty,0]$ to $\rkrs$.

\smallskip
The resulting Equation \eqref{RFDAEs:01} is an example of \emph{Retarded Functional 
Differential-Algebraic Equation} (RFDAE). For $\lambda\geq 0$, we are interested in the 
$T$-periodic solutions of \eqref{RFDAEs:01}.
  
Since $\D_2 g (p,q)$ is invertible for any $(p, q)\in\R^k\times \R^s$, $0\in\R^s$ is a 
regular value of $g$, and so $M:=g^{-1}(0)$ is a $C^\infty$ manifold and a closed subset of 
$\R^k\X\R^s\cong\R^n$. This is important as we wish to use the results of \cite{fupespa2} 
that depend in an essential manner on $M$ being closed. \emph{Throughout the paper we will 
always denote the submanifold $g^{-1}(0)$ of $\rkrs$ by $M$}. Unless differently stated, 
the points of $M$ will written as pairs $(p,q)\in M$.

\smallskip
Notice that the Implicit Function Theorem implies that $M$ can be locally represented as a graph 
of some map from an open subset of $\R^k$ to $\R^s$. Thus, in principle, Equation \eqref{RFDAEs:01} 
can be locally decoupled. Globally, however, this might be not the case or it could not be 
convenient to do so (see, e.g.\ \cite{cala, spaDAE}).
\smallskip

As we shall see, proceeding as in \cite[\S 4.5]{dae} (compare also \cite{spaDAE}) when $\D_2 g(p,q)$ 
is invertible for all $(p,q)\in\rkrs$, Equation \eqref{RFDAEs:01} is equivalent to an RFDE on $M$ of 
the form considered in \cite{fupespa2}. Some related ideas, in the context of constrained mechanical 
systems, can be found in \cite{RR}. In order to obtain information about the set of $T$-periodic 
solutions of \eqref{RFDAEs:01}, we will use the techniques of \cite{fupespa2} combined with a result 
of \cite{spaDAE} about the degree of the tangent vector field on $M$ induced by the unperturbed 
Equation \eqref{RFDAEs:0}. Our aim will be to show the existence of a ``noncompact branch'' of 
$T$-periodic solutions of \eqref{RFDAEs:01} emanating from the set of the constant solutions of 
\eqref{RFDAEs:0}. Namely, denoted by $C_T(\rkrs)$ the Banach space of the $T$-periodic, 
$(\rkrs)$-valued functions, we will prove the existence of a connected set of triples 
$(\lambda,x,y)\in [0,+\infty)\times C_T(\rkrs)$, with $(x,y)$ a nonconstant $T$-periodic solution to 
\eqref{RFDAEs:01}, whose closure is noncompact and meets the set of constant solutions of 
\eqref{RFDAEs:0}.

In the last section of this paper, in order to illustrate our results we provide some applications
to a particular class of implicit retarded functional differential equations.

\section{Associated vector Fields and RFDEs on $M$}\label{sec2}

In this section, following \cite[Chapter 4, \S 5]{dae} (compare also \cite{spaDAE}), we associate 
to \eqref{RFDAEs:01} a RFDE on $M = g^{-1}(0)$.  

We first discuss the notion of solution to a retarded functional DAE of the form \eqref{RFDAEs:01}.
Let $f\colon\rkrs\to\R\sp{k}$ and $g\colon\rkrs\to\R\sp{s}$ be given maps with $f$ continuous 
and $g\in C^{\infty}(\rkrs, \R\sp{s})$ with the property that $\D_2 g (p,q)$ is invertible for 
any $(p,q)\in\R^k\times\R^s$.  Given $T>0$, consider also a continuous and $T$-periodic in the 
first variable  map $h\colon\R\times\burkrs\to\R\sp{k}$.  A solution of \eqref{RFDAEs:01}, for a 
given $\lambda\geq 0$, consists of a pair of functions $(x,y)\in C(I,\rkrs)$, $I\subseteq\R$ an 
interval with $\inf I=-\infty$, such that $x$ and $y$ are bounded and uniformly continuous on any 
half-line of the form $(-\infty, b]$ with $b\leq\sup I$, and
\begin{subequations}\label{daedef}
 \begin{equation}\label{daemanif}
  g\big(x(t), y(t)\big)=0,\quad\text{ for all $t\in I$},
 \end{equation}
and, eventually,
\begin{equation}\label{daeeq}
      \dot  x(t) = f\big(x(t), y(t)\big) + \lambda h(t, x_t, y_t).
\end{equation}
\end{subequations}
The latter assertion means that there exists a subinterval $J\subseteq I$ with $\sup J=\sup I$ on 
which \eqref{daeeq} holds.
Observe that, by the Implicit Function Theorem, $y$ is $C^1$ on $J$. Therefore, a solution of 
\eqref{RFDAEs:01} is a function $\zeta:=(x,y)$ which is bounded and uniformly continuous on any 
half-line of the form $(-\infty, b]$ with $b\leq\sup I$, and is eventually a $C^1$ function, i.e., 
$\zeta\in C^1(J,\rkrs)$.

Let us now associate tangent vector fields on $M$ to $f$ and $h$. Recall that given a differentiable
manifold $N\subseteq\R^n$, a continuous map $w\colon N\to\R^n$ with the property that for any $p\in N$, 
$w(p)$ belongs to the tangent space $T_pN$ to $N$ at $p$ is called a \emph{tangent vector field on 
$N$}. Similarly, a time-dependent \emph{functional (tangent vector) field on $N$} is a map 
$W\colon\R\X\buN\to\rkrs$, such that $W(t,\varphi,\psi)\in T_{(\varphi(0),\psi(0))}N$, for all 
$(t,\varphi,\psi)\in\R\X\buN$.

Consider the maps 
$\Psi\colon M\to\rkrs$ and $\Upsilon \colon \R\X\bum \to \rkrs$ defined as follows:
\begin{subequations}\label{campiv}
\begin{align}
  &\Psi (p, q)= \big( f(p, q), [\D_2 g(p, q)]^{-1}
  \D_1 g  (p, q) f(p, q) \big),\,\, \textrm{and} \label{RFDAEs:03}   \\
  &\Upsilon (t, \varphi, \psi)= \big( h(t, \varphi, \psi), -[\D_2 g(
  \varphi(0), \psi(0))]^{-1} \D_1 g (\varphi(0), \psi(0)) h(t,
  \varphi, \psi) \big). \label{RFDAEs:04}
\end{align}
\end{subequations}
Using the fact that, given a point $(p,q)\in M$, $T_{(p,q)}M$ is the kernel $\ker d_{(p,q)}g$ of 
the differential $d_{(p,q)}g$ of $g$ at $(p, q)$, it can be easily proved that $\Psi$ is tangent to 
$M$ in the sense that $\Psi(p,q)$ belongs to $T_{(p,q)}M$ for all $(p,q)\in M$ (compare, e.g.\ 
\cite{spaDAE}). Similarly, we have that $\Upsilon$ is tangent to $M$, in the sense that 
$\Upsilon(t,\varphi,\psi)\in T_{(\varphi(0),\psi(0))}M$, for all $(t,\varphi,\psi)\in\R\X\bum$. In 
other words, we see that $\Psi$ is a tangent vector field, whereas $\Upsilon$ is a time-dependent 
functional field on $M$. Since $h$ is assumed $T$-periodic in the first variable, so is $\Upsilon$.
Notice that, for any $\lambda\geq0$, the map of $\R\times BU\big((-\infty, 0],M)$ in $\rkrs$, defined 
by
\begin{equation*}
  (t,\varphi,\psi)\mapsto \Psi\big(\varphi(0), \psi(0)\big)+\lambda 
  \Upsilon(t,\varphi, \psi), 
\end{equation*}
is a functional tangent vector field as well.

We claim that \eqref{RFDAEs:01} is equivalent to the following RFDE on $M$, which keeps implicitly 
account of the algebraic condition $g(x,y)=0$:
\begin{equation} \label{RFDAEs:05} 
\dot \zeta(t) = \Psi \big(\zeta(t)\big) + \lambda  \Upsilon (t, \zeta_t),
\end{equation}
where we have used the compact notation $\zeta_t = (x_t, y_t)$, in the sense that $\zeta=(x,y)$ is 
a solution of \eqref{RFDAEs:05} in an interval $I\subseteq\R$ if and only if so is $(x,y)$ for 
\eqref{RFDAEs:01}. To verify the claim, let $\zeta =(x, y)$ be a solution of \eqref{RFDAEs:01}, 
defined on $I\subseteq \R$. Let $J\subseteq I$ be a subinterval where \eqref{daeeq} holds. By 
differentiation of the algebraic equation $g\big(x(t),y(t)\big)=0$, one gets
\begin{equation*}
  0=\partial_1 g(x(t),y(t))\dot x(t) + \partial_2 g(x(t),y(t))\dot y(t),
\end{equation*}
whence
\begin{equation} \label{RFDAEs:02}
    \dot y(t) = [\D_2 g(x(t), y(t))]^{-1}
    \D_1 g (x(t), y(t)) \left[ f(x(t), y(t)) + \lambda h(t, x_t, y_t)\right],
\end{equation}
when $t\in J$. Hence, the solutions of \eqref{RFDAEs:01} correspond to those of \eqref{RFDAEs:05}. 
The converse correspondence is more straightforward and follows from the fact that a solution 
$\zeta=(x,y)$ of \eqref{RFDAEs:05} defined on an interval $I$ with $\inf I=-\infty$  satisfies
identically $\big(x(t),y(t)\big)\in M$, which implies \eqref{daedef}, and eventually fulfills 
\[
\dot\zeta(t)=\Psi\big(\zeta(t)\big)+\lambda \Upsilon (t,\zeta_t),
\] 
whose first component is \eqref{daeeq}.

\medskip
We now introduce the important technical assumption \hipK{} below on the function $h$. This 
hypothesis implies a similar property, called condition \hipH{} (discussed e.g.\ in \cite{lavesp}), for 
the induced vector $\Upsilon$ on $M$ defined in \eqref{RFDAEs:04} that plays a central role in 
\cite{fupespa2}. This fact allows us to apply the methods of \cite{fupespa2} to our situation.

Throughout this paper, we shall suppose that $f$ is locally Lipschitz and that $h$ satisfies the 
following assumption \hipK{}: 
\begin{definition}
We say that $\mathcal{K}\colon\R\times BU\big((-\infty,0],\R^n\big)\to\R^d$ satisfies \hipK{} if,
given any compact subset $C$ of $\R\times BU\big((-\infty,0],\R^n\big)$, there exists $\ell\geq 0$ 
such that 
\begin{equation*}
| \mathcal{K}(t,\varphi) - \mathcal{K}(t,\psi)|_{d}
   \leq \ell\sup_{t\le 0} | \varphi(t)-\psi(t) |_n,
\end{equation*}
for all $(t,\varphi)\,,(t,\psi) \in C$. Here $|\cdot|_{n}$ and $|\cdot|_d$ represent the Euclidean 
norm in $\R^{n}$ and $\R^d$, respectively. Furthermore, we say that condition \hipK{} holds locally 
in $\R\X BU\big((-\infty,0],\R^n\big)$ if for any $(\tau,\eta)\in\R\X BU\big((-\infty,0],\R^n\big)$ 
there exists a neighborhood of $(\tau,\eta)$ in which \hipK{} holds.
\end{definition}

It can be proved (see e.g.\ \cite{lavesp}) that if a functional field on $M$ satisfies \hipH{} locally, 
then any associated initial value problem admits a unique solution. This shows, given the
equivalence of \eqref{RFDAEs:01} and \eqref{RFDAEs:05}, that if $f$ and $h$ satisfy \hipK{} then
any initial value problem associated to \eqref{RFDAEs:01} has unique initial solution. 

One could show that if \hipK{} is satisfied locally, then it is also satisfied globally. However, the 
local condition is easier to check. It holds, for instance, when $\mathcal{K}$ is $C^1$ or, more 
generally, locally Lipschitz in the second variable. 

The assumption that $h$ satisfies \hipK{} means that for any compact subset $C$ of 
$\R\X BU\big((-\infty,0],\rkrs\big)$, there exists a constant $\ell\geq 0$ such that
\begin{equation*}
| h(t,\varphi_1,\psi_1) - h(t,\varphi_2,\psi_2)|_{k}
   \leq \ell\sup_{t\le 0} \Big(| \varphi_1 (t)-\varphi_2(t) |_k +|\psi_1(t)-\psi_2(t)|_s\Big)
\end{equation*}
for all $(t,\varphi_1,\psi_1)\,,(t, \varphi_2,\psi_2) \in C$. Here $|\cdot|_k$ and $|\cdot|_s$ 
represent the Euclidean norm in  $\R^k$ and $\R^s$, respectively. Observe that if $f\colon\rkrs\to\R^k$ 
is a locally Lipschitz tangent vector field, and $h$ is a functional field satisfying \hipK{}, then for 
any $\lambda\in[0,+\infty)$ the map of $\R\times BU\big((-\infty,0],\rkrs)$ in $\R^k$, given by
\[
(t,\varphi,\psi)\mapsto f\big(\varphi(0),\psi(0)\big)+\lambda h(t,\varphi,\psi),
\]
verifies \hipK{} as well.

If $\Psi$ and $\Upsilon$ are the functional fields on $M$ defined in \eqref{campiv}, it is easy to 
see that for any $\lambda\in[0,+\infty)$ the map of $\R\times BU\big((-\infty,0],M)$ in $\rkrs$, 
given by
\[
(t,\varphi,\psi)\mapsto\Psi\big(\varphi(0),\psi(0)\big)+\lambda\Upsilon(t,\varphi,\psi),
\]
verifies the condition \hipH{} discussed in \cite{lavesp,fupespa2}.

\section{The degree of the tangent vector field $\Psi$}

In this section we introduce some basic notions about the degree of tangent vector fields on 
manifolds. Let $N\subseteq\R^n$ be a differentiable manifold. Recall that if $w\colon N\to\R^n$ is a 
tangent vector field on $N$ which is (Fr\'echet) differentiable at $p\in N$ and $w(p) = 0$, then 
the differential $d_p w \colon T_pN\to\R^n$ maps $T_pN$ into itself (see e.g.\ \cite{milnor}), 
so that, the determinant $\det\, d_p w$ of $d_p w$ is defined.  In the case when $p$ is a 
nondegenerate zero (i.e.\ $d_p w \colon T_p N\to \R^n$ is injective), $p$ is an isolated zero 
and $\det\, d_p w \ne 0$.  Let $W$ be an open subset of $N$ in which we assume $w$ admissible for 
the degree, that is we suppose the set $w^{-1}(0)\cap W$ is compact. Then, it is possible to 
associate to the pair $(w, W)$ an integer, $\deg(w, W)$, called the degree (or characteristic) of 
the vector field $w$ in $W$ (see e.g.\ \cite{fupespa,difftop}), which, roughly speaking, counts 
(algebraically) the zeros of $w$ in $W$ in the sense that when the zeros of $w$ are all 
nondegenerate, then the set $w^{-1}(0)\cap W$ is finite and
\begin{equation*} \label{RFDAEs:deg} 
\deg(w, W) = \sum_{q \in w^{-1}(0)\cap W} \sign\, \det\, d_q w.
\end{equation*}
The concept of degree of a tangent vector field is related to the classical one of Brouwer degree
(whence its name), but the former notion differs from the latter when dealing with manifolds.
In particular, the former does not need the orientation of the underlying manifolds. However, 
when $N=\R^n$, the degree of a vector field $\deg(w, W)$ is essentially the well known Brouwer 
degree of $w$ on $W$ with respect to $0$ (recall that in Euclidean spaces vector fields can be 
regarded as maps). For the main properties of the degree we refer e.g.\ to 
\cite{fupespa, difftop, milnor}.

Let now $g\colon\rkrs\to\R\sp{s}$ and $f\colon\R\times\rkrs\to\R\sp{k}$ be given maps such that $f$ 
is continuous and $g$ is $C^{\infty}$ with the property that $\partial_2 g (p, q)$ is invertible 
for all $(p,q)\in\rkrs$. Let $\Psi$ be the tangent vector field on $M= g^{-1}(0)$ given by 
\eqref{RFDAEs:03}. 

A crucial requisite for the remainder of the paper is that the degree of $\Psi$ is nonzero. The 
following consequence of \cite[Th.\ 4.1]{spaDAE} (see also \cite{CaSp}) allows us to replace
this condition with a more manageable one, at least in principle.

\begin{proposition} \label{RFDAEs:teo1}  Let $F\colon\rkrs\to\rkrs$ be given by
\[
(p, q) \mapsto \big(f(p, q), g(p, q)\big)
\]
and let $V\subseteq\rkrs$ be an open set.  Then, if either $\deg(\Psi, M\cap V)$ or $deg(F,V)$ is 
well defined, so is the other, and  
\begin{equation}\label{valasso}
   |\deg (\Psi, M\cap V)| = |\deg (F, V)|.
\end{equation}
\end{proposition}

\begin{proof}
Follows immediately from Theorem 4.1 in \cite{spaDAE} and the excision property.
\end{proof}

\section{Connected sets of $T$-periodic solutions}\label{sec4}

This section is concerned with the set of periodic solutions to
\eqref{RFDAEs:01}. As in Section \ref{sec2} we are given maps
$f\colon\rkrs\to\R^k$, $g\colon\rkrs\to\R^s$ and $h\colon \R \X
\burkrs \to \R\sp{k}$, and we assume that
\begin{enumerate}
\item $f$ is locally Lipschitz;
\item $g$ is $C^\infty$ and such that $\det \partial_2g(p, q) \ne 0$
  for all $(p, q)\in\rkrs$;
\item $h$ satisfies \hipK{} and, given $T > 0$, is $T$-periodic with
  respect to its first variable.
\end{enumerate}

Denote by $C_T(\rkrs)$ the Banach space of all the continuous $T$-periodic functions assuming 
values in $\rkrs$ with the usual supremum norm. We say that $(\mu, \xi)\in[0,+\infty)\X\ctrkrs$ 
is a \emph{$T$-periodic pair} for \eqref{RFDAEs:01} if $\xi=(x, y)$ satisfies \eqref{RFDAEs:01} 
for $\lambda = \mu$. Here, as well as in what follows, the elements of $\ctrkrs$ will be written 
as pairs whenever convenient. In this way, $T$-periodic pairs actually will be often written as 
triples. Moreover, given $(p,q)\in\rkrs$, denote by $\big(\su p,\su q\big)$ the element of 
$\ctrkrs$ that is constantly equal to $(p, q)$. A $T$-periodic pair of the form $(0,\su p,\su q)$ 
is called \emph{trivial}.

Let $F\colon\rkrs\to\rkrs$ be the vector field given by
\begin{equation}\label{RFDAEs:F}
  F(p, q) = \big(f(p, q), g(p, q)\big).
\end{equation}
It can be easily verified that $(\su p,\su q)$ is a constant solution
of \eqref{RFDAEs:01} for $\lambda=0$ if and only if $F(p,q)=(0,
0)$. Thus, with the above notation, the set of trivial $T$-periodic
pairs can be written as
\begin{equation*}
  \big\{ (0,\su p,\su q) \in [0,+\infty) \X \ctrkrs : F(p, q) = (0, 0)\big\}.
\end{equation*}
The following convention is very handy. Given subsets $\Omega$ and $X$
of $[0,+\infty)\X\ctrkrs$ and of $\rkrs$, respectively, with
$\Omega\cap X$ we denote the set of points of $X$ that, regarded as
constant functions, lie in $\Omega$. Namely,
\[
\Omega\cap X=\{(p,q)\in X: (0,\su p,\su q)\in\Omega\}.
\]

The next result provides an insight into the topological structure of
the set of $T$-periodic solutions of \eqref{RFDAEs:01}.

\begin{theorem}\label{RFDAEs:prop1}
  Let $f$, $h$ and $g$ be as above.  Let also $F\colon \rkrs\to\rkrs$ be
  defined as in \eqref{RFDAEs:F}.  Let $\Omega$ be an open subset of
  $[0,+\infty)\X \ctrkrs$ and Assume that $\deg\big(F,
  \Omega\cap(\rkrs) \big)$ is well-defined and nonzero. Then, the set
  of nontrivial $T$-periodic pairs of \eqref{RFDAEs:01}, admits a
  connected subset whose closure in $\Omega$ is noncompact and meets
  the set of trivial $T$-periodic pairs in $\Omega$, i.e.\ the set
  $\big\{ (0,\su p,\su q)\in\Omega:F(p,q)=(0,0)\big\}$.
  In particular, the set of $T$-periodic pairs for \eqref{RFDAEs:01}
  contains a connected component that meets $\big\{(0,\su p,\su
  q)\in\Omega:F(p,q)=(0,0)\big\}$ and whose intersection with $\Omega$
  is not compact.
\end{theorem}

\begin{proof}
Let $\Psi$ and $\Upsilon$ be as in \eqref{campiv}. Then \eqref{RFDAEs:01} is equivalent 
to \eqref{RFDAEs:05} on $M=g^{-1}(0)$.  Denote by $C_T(M)$ the metric subspace of the 
Banach space $C_T(\rkrs)$, of all the continuous $T$-periodic functions taking values in 
$M$. Let also $\mathcal{O}$ be the open subset of $[0,+\infty)\X C_T(M)$ given by
\[
  \mathcal{O}=\Omega\cap \Big([0,+\infty)\X C_T(M)\Big).
\]
Given $Y\subseteq M$, by $\mathcal{O}\cap Y$ we mean the set of all those points of $Y$ that, 
regarded as constant functions, lie in $\mathcal{O}$. With this convention one clearly has 
$\Omega\cap Y=\mathcal{O}\cap Y$ and, in particular, $\Omega\cap M=\mathcal{O}\cap M$. This 
identity and Proposition \ref{RFDAEs:teo1} imply that
\[
  \deg(\Psi,\mathcal{O}\cap M)=\deg(\Psi,\Omega\cap
         M)=\pm\deg\big(F,\Omega\cap(\rkrs)\big)\neq 0.
\]
Thus, Theorem 4.1 in \cite{fupespa2} yields the existence of a connected subset $\Lambda$ of
\[
  \big\{ (\lambda,x,y)\in \mathcal{O}:\text{$(x,y)$ is a nonconstant
    solution of \eqref{RFDAEs:05} }\big\},
\]
  whose closure $\cl{\Lambda}$ in $\mathcal{O}$ is not compact and meets the set
\[
  \big\{(0,\su p,\su q)\in\mathcal{O}:\Psi(p,q)=(0,0)\big\},
\]
that coincides with $\big\{(0,\su p,\su q)\in\Omega:F(p,q)=(0,0)\big\}$.

Clearly, each $(\lambda,x,y)\in\Lambda$ is a nontrivial $T$-periodic pair of \eqref{RFDAEs:01}. 
Since $M$ is closed in $\rkrs$, it is not difficult to prove that any set which is closed in 
$\mathcal{O}$ is closed in $\Omega$ too, and vice versa. Thus, $\cl\Lambda$ coincides with 
the closure of $\Lambda$ in $\Omega$. The first part of the assertion follows.

Let us prove the last part of the assertion. Consider the connected component $\Gamma$ of the
set of $T$-periodic pairs that contains the connected set $\Lambda$. We shall now show that 
$\Gamma$ has the required properties.  Clearly, $\Gamma$ meets the set 
$\big\{(0,\su{p},\su{q})\in\Omega:g(p,q)=0\big\}$ because the closure of $\Lambda$ in $\Omega$ 
does. Moreover, $\Gamma\cap\Omega$ cannot be compact, since it contains the (noncompact) 
closure of $\Lambda$ in $\Omega$.
\end{proof}

\begin{remark}\label{obs1}
  Let $\Omega$ be as in Theorem \ref{RFDAEs:teo1}, and assume that
  $\Gamma$ is a connected component of $T$-periodic pairs of
  \eqref{RFDAEs:01} that meets $\{(0,\su p,\su
  q)\in\Omega:F(p,q)=(0,0)\}$ and whose intersection with $\Omega$ is
  not compact. Ascoli's Theorem implies that any bounded set of
  $T$-periodic pairs is relatively compact.  Then, the closed set
  $\Gamma$ cannot be both bounded and contained in $\Omega$. In
  particular, if $\Omega$ is bounded, then $\Gamma$ necessarily meets
  the boundary of $\Omega$.
\end{remark}

The following corollary ensures the existence of a Rabinowitz-type
branch of $T$-periodic pairs.

\begin{corollary}\label{cor1}
  Let $f$, $h$ and $g$ be as in Theorem \ref{RFDAEs:teo1}. Let
  $V\subseteq\rkrs$ be open and assume that $\deg(F,V)$ is well
  defined and nonzero. Then, there exists a connected component
  $\Gamma$ of $T$-periodic pairs of \eqref{RFDAEs:01} that meets the
  set
  \[
  \big\{(0,\su{p},\su{q})\in [0,+\infty)\times C_T(\rkrs): (p,q)\in
  V\cap F^{-1}(0,0)\big\}
  \]
  and is either unbounded or meets
  \[
  \big\{(0,\su{p},\su{q})\in [0,+\infty)\times C_T(\rkrs): (p,q)\in
  F^{-1}(0,0)\setminus V\big\}.
  \]
\end{corollary}

\begin{proof}
  Consider the open subset $\Omega$ of $[0,+\infty)\times C_T(\rkrs)$
  given by
  \begin{multline*}
    \Omega=\big([0,+\infty)\times C_T(\rkrs)\big)\setminus \\
    \setminus \big\{ (0,\su{p},\su{q})\in [0,+\infty)\times
    C_T(\rkrs): (p,q)\in F^{-1}(0,0)\setminus V\big\}.
  \end{multline*}
Clearly, we have $\Omega\cap(\rkrs)=V$. Hence $\deg\big(F,\Omega\cap(\rkrs)\big)\neq 0$. 
Theorem \ref{RFDAEs:teo1} implies the existence of a connected component $\Gamma$ of 
$T$-periodic pairs of \eqref{RFDAEs:01} that meets $\{(0,\su p,\su q)\in\Omega:F(p,q)=0\}$ 
and whose intersection with $\Omega$ is not compact. Because of Remark \ref{obs1}, if 
$\Gamma$ is bounded, then it meets the boundary of $\Omega$ which is given by
  \[
  \big\{ (0,\su{p},\su{q})\in [0,+\infty)\times C_T(\rkrs): (p,q)\in
  F^{-1}(0,0)\setminus V\big\}.
  \]
And the assertion is proved.
\end{proof}

\medskip
\begin{example}
The well-known logistic equation (see, e.g. \cite{brauer-cast})
  \begin{equation}\label{RFDAEs:logisteq}
    \dot x=\alpha x -\beta x^2
  \end{equation}
is sometimes used as a model for a population $x$ with birth and mortality rate $\alpha x$ and 
$\beta x^2$, respectively. Consider a generalization of \eqref{RFDAEs:logisteq} where the mortality 
rate $y$ is related to the population by the implicit relation $g(x,y)=0$. This generalized model 
is expressed by the following DAE:
  \begin{equation*}
    \left\{
      \begin{array}{l}
        \dot x= \alpha x - y ,\\
        g(x,y)=0.
      \end{array}
    \right.
  \end{equation*}
If we allow the population's fertility to undergo periodic oscillations ---say $\lambda h(t,x_t)$ 
with $\lambda\geq 0$ ---depending possibly on the history of the population, the above model
can be modified into the following RFDAE:
  \begin{equation}\label{RFDAEs:genlogeq}
    \left\{
      \begin{array}{l}
        \dot x(t)= \alpha x(t) - y(t)+\lambda h(t,x_t),\\
        g\big(x(t),y(t)\big)=0.
      \end{array}
    \right.
  \end{equation}
Examples of the perturbation $h(t, x_t)$ can obtained taking inspiration from models describing 
the dynamics of animals populations (see, e.g.\ \cite{brauer-cast, burt-1}) in which the delay 
depends on time. 

Here, however, we are interested in Equation \eqref{RFDAEs:genlogeq} in itself regardless of its 
biological meaning. In particular, we wish to look at how Theorem \ref{RFDAEs:prop1} can be applied 
to it. Notice that it could be impossible to get a biologically relevant result merely 
from such an application. In fact, \eqref{RFDAEs:genlogeq} makes sense as a population model only 
as long as $x\geq 0$, but there is no guarantee that the $x$-component of all the solutions in the 
branch of $T$-periodic pairs provided by this theorem are nonnegative.

Consider, for instance, the case when $\alpha>0$ and $g(x,y)=y^3+y-x^5$. Let 
$\Omega=[0,+\infty)\X C_T(\rkrs)$. The map $F$ defined in \eqref{RFDAEs:F} is given by 
$F(x,y)=(\alpha x -y, y^3+y-x^5)$, and a simple direct computation shows that 
$\Omega\cap F^{-1}(0,0)$ consists of the singleton $\{(0,0)\}$ and that 
$\deg(F,\Omega\cap\R^2)=-1$.  Hence, Theorem \ref{RFDAEs:prop1} yields an unbounded connected 
component of periodic $2\pi$-periodic pairs emanating from the trivial $2\pi$-periodic pair 
$(0,\su 0,\su 0)$.
\end{example}

\section{An application}
This section is primarily intended as an illustration of our main result Theorem \ref{RFDAEs:prop1}. 
For this reason we will not pursue maximal generality but restrict ourselves to simple situations. 
Below, we consider retarded periodic perturbations of a particular class of implicit ordinary 
differential equations. Namely, we study equations of the following form:
\begin{equation} \label{RFDAEs:strange1} 
E \dot{\xbf}(t) =  \mathcal{F}\big(\xbf(t)\big) + \lambda \mathcal{H}(t, \xbf _t),
\qquad \lambda\geq 0,
\end{equation}
where $E\colon \R^n\to\R^n$ is a linear endomorphism of $\R^n$, $\mathcal{F}\colon \R^n\to\R^n$ 
and $\mathcal{H}\colon\R\X\bun\to\R^n$ are continuous maps with $\mathcal{F}$ locally Lipschitz 
and $\mathcal{H}$ verifies condition \hipK{}.

Equation \eqref{RFDAEs:strange1}, when $\lambda=0$, is quite a particular case of semi-linear DAE
(see e.g.\ \cite{RR0} and references therein). Such equations, even in the further particular case 
when $\mathcal{F}$ is linear, have some practical interest. In fact, they can be used to model 
such things as electrical circuits or chemical reactions (see e.g.\ \cite{Sch}). Our approach here
is inspired to that of \cite{dae,RR0} for the linear, constant coefficients, case.

We will show how, in some circumstances, \eqref{RFDAEs:strange1} can be transformed into a 
\mbox{RFDAE} of type \eqref{RFDAEs:01} by the means of relatively simple linear transformations. We 
will apply the results of the previous section to the resulting RFDAE. A first example of the above 
mentioned transformation is considered in the following remark:

\begin{remark}\label{RFDAEs:prop-nonlin-ex}
Consider Equation \eqref{RFDAEs:strange1} and let $r>0$ be the rank of $E$. Assume that there exists 
a orthogonal basis of $\R^r\X\R^{n-r}$ with respect to which $E$ can be written in the following
block form:
\begin{subequations}\label{RFDAEs:ex0}
\begin{equation}\label{RFDAEs:ex0a}
  E\simeq\begin{pmatrix} 
               E_{11} & E_{12} \\
               0      & 0 
        \end{pmatrix},\text{ with $E_{11}\in\R^{r\X r}$ invertible and 
         $E_{12}\in\R^{r\X (n-r)}$}. 
\end{equation}
Assume also that in this basis $\mathcal{H}$ has, with a slight abuse of notation, the following
form:
\begin{equation}\label{RFDAEs:ex0b}
\mathcal{H}(t, \varphi)=\begin{pmatrix}
              \mathcal{H}_1(t, \varphi) \\
               0 
                                \end{pmatrix},\; 
         \textrm{ with }\, \mathcal{H}_1\colon \R
      \times \bun\to \R^r.  
\end{equation}
  \end{subequations}
In $\R^n\simeq\R^r\X\R^{n-r}$ put $\xbf =(\xi,\eta)$ and let 
$J_E\colon\R^r\X\R^{n-r}\to\R^r\X\R^{n-r}$ be the linear transformation represented by 
the following block matrix:
\begin{equation*}
    \begin{pmatrix}
      E_{11}^{-1} & -E_{11}^{-1}E_{12} \\ 
      0           &   I
    \end{pmatrix}.
\end{equation*}
Let $(x,y)=J_E^{-1}(\xi,\eta)$, and let $\mathcal{F}_1(\xi,\eta)$ and $\mathcal{F}_2(\xi,\eta)$ 
denote the projection of $\mathcal{F}(\xi,\eta)$ onto the first and second factor, respectively, 
of $\R^r\X\R^{n-r}$. Then, in the new variables $x$ and $y$ Equation \eqref{RFDAEs:strange1} 
becomes, with a slight abuse of notation,
\[ 
EJ_E \begin{pmatrix}
      \dot x\\
      \dot y
     \end{pmatrix} = 
     \begin{pmatrix}
      \mathcal{F}_1\big(J_E(x,y)\big) \\
      \mathcal{F}_2\big(J_E(x,y)\big)
     \end{pmatrix}+\lambda
     \begin{pmatrix}
      \mathcal{H}_1\big(t,J_E( x_t,y_t)\big) \\
      0
     \end{pmatrix},
\]
or, equivalently, 
\begin{equation} \label{RFDAEs:test} 
\left\{\begin{array}{l}
        \dot{{x}} = \widetilde{\mathcal{F}}_1({x}, {y}) + \lambda
        \widetilde{\mathcal{H}}_1 (t,{x}_t, {y}_t),\\
        \widetilde{\mathcal{F}}_2({x}, {y}) =0,
      \end{array}\right.
\end{equation}
where $\t{\mathcal{F}}_i(x,y)=\mathcal{F}_i\big(J_E(x,y)\big)$, $i=1,2$, and
$\t{\mathcal{H}}_1(t,\varphi) =\mathcal{H}_1\big(t,J_E\circ\varphi\big)$, for any
$(t,\varphi)\in\R\X\bun$.  Furthermore, since $\mathcal{H}$ satisfies \hipK{}, it 
is not difficult to prove that $\t{\mathcal{H}}_1$ satisfies \hipK{} as well.
\end{remark}

\begin{example}\label{ex1app}
 Consider the following DAE in $\R^2\X\R^3$:
\begin{equation}\label{daeex}
 \left\{
\begin{array}{l}
 \dot\xi_1+\dot\xi_2+\dot\eta =\xi_2,\\
 \dot\xi_1 = -\xi_1+\xi_2^2+\eta ,\\
 0=\eta ^3+\eta +\xi_1,
\end{array}
\right.
\end{equation}
which can be written as the implicit ODE below where $\xbf=(\xi_1,\xi_2,\eta)$
\begin{equation}\label{impeqex}
 E\dot\xbf=\mathcal{F}(\xbf),
\end{equation}
where $E$ is the endomorphism of $\R^2\X\R$ represented by the block matrix
\[
\left( \begin{array}{cc|c} 
          1 & 1 & 1 \\
          1 & 0 & 0 \\
          \hline
          0 & 0 & 0 \\
\end{array} \right)
\]
and $\mathcal{F}\colon\R^3\to\R^3$ is given by 
$\mathcal{F}(\xi_1,\xi_2,\eta )=(\xi_2,-\xi_1+\xi_2^2+\eta ,\eta ^3+\eta +\xi_1)$. Let 
$J_E$ be the linear transformation of $\R^3\simeq\R^2\X\R$ represented by the block matrix
\[
 \left( \begin{array}{cc|c} 
          0 & 1 & 0 \\ 
          1 & -1 & -1 \\ 
          \hline
          0 & 0 & 1 \\
        \end{array} \right),
\]
and put $(x_1,x_2,y)=J_E^{-1}(\xi_1,\xi_2,\eta )$. One has that
\[
\mathcal{F}\big(J_E(x_1,x_2,y)\big)
      =\big( x_1 -x_2 -y, (x_1 -x_2- y)^2 +y-x_2, y^3  + y + x_2\big),
\]
As in Remark \ref{RFDAEs:prop-nonlin-ex}, for $\xi=(\xi_1,\xi_2)$, let $\mathcal{F}_1(\xi,\eta)$ 
and $\mathcal{F}_2(\xi,\eta)$ denote the projection of $\mathcal{F}(\xi,\eta)$ onto the first and 
second factor, respectively, of $\R^2\X\R$. Put also 
\[
\t{\mathcal{F}}_i(x,y)=\mathcal{F}_i\big(J_E(x,y)\big),\quad i=1,2,
\] 
where $x=(x_1,x_2)$. Proceeding as in Remark \ref{RFDAEs:prop-nonlin-ex}, we transform Equation 
\eqref{daeex} into
\[
 \left\{ \begin{array}{l} 
        \dot x = \t{\mathcal{F}}_1(x, y), \\ 
        \t{\mathcal{F}}_2(x, y) =0,
      \end{array}\right.
\]
that can be written more explicitly as follows:
\begin{equation*}
    \left\{ \begin{array}{l} 
        \dot x_1 = x_1 -x_2- y, \\ 
        \dot x_2 = (x_1 -x_2+ y)^2  +y -x_2, \\
        y^3  + y + x_2 =0.
      \end{array}\right.
  \end{equation*}
\end{example}

\medskip
Theorem \ref{RFDAEs:prop1} combined with the above Remark \ref{RFDAEs:prop-nonlin-ex},  
yields Proposition \ref{RADEs:prop01-nonlinearcase} below concerning the set of $T$-periodic 
solutions of \eqref{RFDAEs:strange1}. We use here the convention on the subsets of 
$[0,+\infty)\X\ctrrrn$ introduced in section 4. We also need to introduce some further 
notation.

A pair $(\lambda,\xbf )\in[0,+\infty)\X C_T(\R^n)$ is a \emph{$T$-periodic pair 
for \eqref{RFDAEs:strange1}} if $\xbf $ is a solution of \eqref{RFDAEs:strange1}
corresponding to $\lambda$. A $T$-periodic pair $(0,\xbf )$ for 
\eqref{RFDAEs:strange1} is \emph{trivial} if $\xbf $ is constant.

\begin{proposition} \label{RADEs:prop01-nonlinearcase} 
Consider Equation \eqref{RFDAEs:strange1} where $E\colon\R^n\to\R^n$ is linear, 
$\mathcal{F}\colon\R^n\to\R^n$ and $\mathcal{H}\colon\R\X\bun\to\R^n$ are continuous maps such 
that $\mathcal{F}$ is locally Lipschitz and $\mathcal{H}$ verifies condition \hipK{} and is 
$T$-periodic in the first variable. Assume, as in Remark \ref{RFDAEs:prop-nonlin-ex}, that $r>0$
is the rank of $E$ and that there exists an orthogonal basis of $\R^n\simeq\rrrn$ such that $E$ 
and $\mathcal{H}$ can be represented as in \eqref{RFDAEs:ex0}. Relatively to this decomposition
of $\R^2$ suppose that $\D_2{\mathcal{F}}_2(\xi,\eta)$ is invertible for all $(\xi,\eta)\in\rrrn$. 

Let $\Omega$ be an open subset of $[0,+\infty)\X C_T(\R^n)$ and suppose that 
$\deg(\mathcal{F},\Omega\cap\R^n)$ is well-defined and nonzero. Then, there exists 
a connected subset $\Gamma$ of nontrivial $T$-periodic pairs for \eqref{RFDAEs:strange1} 
whose closure in $\Omega$ is noncompact and meets the set 
$\big\{(0,\su{\pbf})\in \Omega:\mathcal{F}(\pbf) =0\big\}$.
\end{proposition}

\begin{proof}
Let $J_E$ be the linear transformation introduced in Remark \ref{RFDAEs:prop-nonlin-ex},
and consider the map $\h{J_E}\colon[0,+\infty)\X C_T(\R^n)\to [0,+\infty)\X C_T(\R^n)$
given by
\[
 \h{J_E}(\lambda,\psi)=(\lambda,J_E\circ\psi).
\]
Observe that since $J_E$ is invertible, $\h{J_E}$ is continuous and invertible, with $\h{J_E}^{-1}$ 
given by $\h{J_E}^{-1}(\lambda,\psi)=(\lambda,{J_E}^{-1}\circ\psi)$ and, hence, continuous. With the 
convention on the subsets of $[0,+\infty)\X C_T(\R^n)$ introduced in section \ref{sec4}, we have
\[
 \h{J_E}^{-1}(\Omega)\cap\R^n={J_E}^{-1}(\Omega\cap\R^n).
\]

According to Remark \ref{RFDAEs:prop-nonlin-ex}, under our assumptions Equation 
\eqref{RFDAEs:strange1} is equivalent to the RFDAE \eqref{RFDAEs:test}. We now show that
Theorem \ref{RFDAEs:prop1} can be applied to Equation \eqref{RFDAEs:test}. Define
$\t{\mathcal{F}}\colon\R^n\to\R^n$ by 
$
\t{\mathcal{F}}(\pbf)=\big(\t{\mathcal{F}}_1(\pbf)\,,
                                 \,\t{\mathcal{F}}_2(\pbf)\big)
                           =\mathcal{F}\big(J_E(\pbf)\big)
$.
The property of invariance under diffeormorphism of the degree (also called topological 
invariance, see e.g.\ \cite{fupespa}) yields
\begin{equation}
\begin{split}\label{RFDAEs:Fcorsiva}
   \deg\big(\mathcal{F},\Omega\cap\R^n\big) =&\,
    \deg\big({J_E}^{-1}\circ\mathcal{F}\circ J_E,J_{E}^{-1}(\Omega\cap\R^n)\big)\\
         &= \deg\big({J_E}^{-1}\circ\t{\mathcal{F}},\h{J_{E}}^{-1}(\Omega)\cap\R^n\big).
\end{split}
\end{equation}
Also, it is not difficult to show that
\begin{equation}\label{id2}
 \deg\big({J_E}^{-1}\circ\t{\mathcal{F}},\h{J_{E}}^{-1}(\Omega)\cap\R^n\big) =
  \sign\det(J_E) \deg\big(\t{\mathcal{F}},\h{J_{E}}^{-1}(\Omega)\cap\R^n\big),
\end{equation}
so that, being $\deg\big(\mathcal{F},\Omega\cap\R^n\big)$ nonzero by assumption,
\eqref{RFDAEs:Fcorsiva}--\eqref{id2} yield
\[
 \deg\big(\t{\mathcal{F}},\h{J_{E}}^{-1}(\Omega)\cap\R^n\big)\neq 0.
\]
Hence, Theorem \ref{RFDAEs:prop1} yields a connected set $\Xi\subseteq \h{J_E}^{-1}(\Omega)$ 
of $T$-periodic pairs of \eqref{RFDAEs:test} whose closure in $\h{{J_E}}^{-1}(\Omega)$ is 
noncompact and meets  the set 
\[
\big\{(0,\su{\pbf})\in\h{J_E}^{-1}(\Omega):\t{\mathcal{F}}(\pbf)=0\big\}.
\]
Since $\h{J_E}$ is a homeomorphism, it is not difficult to show that $\Gamma = \h{J_E}(\Xi)$ 
has the required properties.
\end{proof}

\begin{example}
Let $\mathcal{H}\colon\R\X BU\big((-\infty,0],\R^3\big)\to\R^3$ be as in \eqref{RFDAEs:ex0b} 
with $r=2$. Assume also that $\mathcal{H}$ is $T$-periodic continuous in the first variable. 
Consider the retarded perturbation $\lambda\mathcal{H}(t,\xi_t)$ of Equation \eqref{daeex} in 
Example \ref{ex1app}. Namely, 
\[
 E\dot\xbf(t)=\mathcal{F}\big(\xbf(t)\big)+\lambda \mathcal{H}(t,\xbf_t),
\]
where, using the same notation of Example \ref{ex1app}, we put $\xbf=(\xi_1,\xi_2,\eta)$. 
Take $\Omega=[0,+\infty)\X C_T(\R^2\X\R)$. Observe that $\mathcal{F}^{-1}(0,0,0)=\{(0,0,0)\}$
so that the degree of $\mathcal{F}$ in $\Omega\cap\R^3$ is well-defined. A simple computation 
shows that $\deg(\mathcal{F},\Omega\cap\R^3)=\deg(\mathcal{F},\R^3)=-1$. Thus, Proposition 
\ref{RADEs:prop01-nonlinearcase} yields the existence of a connected subset of nontrivial 
$T$-periodic pairs for the above equation whose closure in $\Omega$ is noncompact and meets 
the set 
\[
\big\{(0,\su{\pbf})\in \Omega:\mathcal{F}(\pbf) =0\big\}=
\{ (0,\su 0,\su 0,\su 0)\in \Omega\}.
\]
\end{example}
\bigskip

Observe that Proposition \ref{RADEs:prop01-nonlinearcase} seems to impose some rather 
severe constraints on the form of $E$ and $\mathcal{H}$ in Equation \eqref{RFDAEs:strange1}. 
In fact, with the help of some linear transformation, one can sometimes lift these 
restrictions. This is the case when the perturbing term $\mathcal{H}$ has a particular 
`separated variables' form that agrees with $E$ in the sense of Equation \eqref{RFDAEs:ex1a}
below. Namely, we consider the following equation:
\begin{equation} \label{RFDAEs:exx1} 
E \dot{\xbf}(t) = \mathcal{F}\big(\xbf(t)\big)  + \lambda C(t) S(\xbf _t),
\end{equation}
where $C\colon \R \to \R^{n\X n}$, $S \colon \bun\to \R^{n}$ are continuous maps, $E$ is a 
(constant) $n\X n$ matrix, and $\mathcal{F}$ is as in Equation \eqref{RFDAEs:strange1}.  We 
also assume that $C$ and $E$ agree in the following sense:
  \begin{equation}\label{RFDAEs:ex1a}
   \ker\, C^T(t) = \ker\, E^T,\;\forall \,t\in\R, \text{ and $\dim\ker\, E^T>0$},
  \end{equation}
As a consequence of the well-known Rouch\'e-Capelli Theorem we get
\begin{multline*}
n-\rank\, E=n-\rank\, E^T=\dim\ker\, E^T=\\
=\dim\ker\, C(t)^T =n-\rank\, C(t)^T=n-\rank\, C(t).
\end{multline*}
Thus, we have that
  \begin{equation}\label{RFDAEs:ex1aa}
      \rank\, E =\rank\, C(t) \text{ is constant and greater than $0$ for all $t\in\R$}.
    \end{equation}
This is a singular value decomposition (see, e.g., \cite{GoVL}) argument based on the 
following technical result from linear algebra: 

\begin{lemma} \label{RFDAEs:lemma00} 
Let $E\in\R^{n\X n}$ and $C\in C\big(\R, \R^{n\X n}\big)$ be as in \eqref{RFDAEs:ex1a}. Put
$r=\rank\, E$, and let $P, Q\in \R^{n\X n}$ be orthogonal matrices that realize a  singular 
value decomposition for $E$.  Then it follows that
  \begin{equation} \label{RFDAEs:continuousSVD} PC(t) Q^T
    = \begin{pmatrix} \widetilde{C}_{11}(t) & 0  \\
      0 & 0 \end{pmatrix},\quad \forall t\in \R ,
  \end{equation}
  with $\widetilde C_{11}\in C\big(\R, \R^{r\X r} \big)$ invertible
  for any $t\in\R$.
\end{lemma}

\medskip
We will provide a proof for Lemma \ref{RFDAEs:lemma00} for the sake of completeness but,
before doing that, we show how it can be used to convert Equation \eqref{RFDAEs:exx1} into 
\eqref{RFDAEs:strange1}.  We begin with an example.

\begin{example}
  Consider Equation \eqref{RFDAEs:exx1} with
  \begin{equation*}
    \label{RFDAEs:commutingmatrices}
E= \begin{pmatrix}
      0 & 2 & 0 & 0 \\
      1 & 0 & 0 & 0 \\
      0 & 0 & 0 & 0 \\
      0 & 0 & 0 & 1 \\
    \end{pmatrix}
\quad\textrm{ and }\quad 
C(t) = \begin{pmatrix}
      c(t) & 0 & 0 & 0 \\
      0 & c(t) & 0 & 0 \\
      0 & 0  & 0 & 0 \\
      0 & 0  & 0 & d(t) \\
    \end{pmatrix},\,\,
  \end{equation*}
where, for any $t\in \R$, $c(t)= \sin(t) + 2$ and $d(t)=\cos(t)+3$. It can be easily verified 
that, with this choice of $E$ and $C$, \eqref{RFDAEs:ex1a} is satisfied. Here, clearly, $r=3$
and $n=4$. Consider the following orthogonal matrices
  \begin{equation*}
    P =\begin{pmatrix} 
      0 & 1 & 0 & 0 \\
      1 & 0 & 0 & 0 \\
      0 & 0 & 0 & 1 \\
      0 & 0 & 1 & 0 \\
    \end{pmatrix}\,\, \textrm{and}\,\, Q =\begin{pmatrix}
      1 & 0 & 0 & 0 \\
      0 & 1 & 0 & 0 \\
      0 & 0   & 0 & 1 \\
      0 & 0   & 1 & 0 \\
    \end{pmatrix},
  \end{equation*}
that realize a singular value decomposition for $E$, that is, in block-matrix form in 
$\R^4\simeq\R^3\X\R$,
  \begin{equation*}
    PEQ^T = \left(\begin{array}{ccc|c} 
        1 & 0 & 0 & 0\\
        0 & 2 & 0& 0\\
        0 & 0 & 1 &  0 \\
        \hline
        0&  0 & 0  & 0 \\
      \end{array}\right)\,\,
    \textrm{and}\,\,
    PC(t)Q^T =\left(\begin{array}{ccc|c} 
        0    & c(t) & 0 & 0 \\
        c(t) &   0  & 0 & 0\\
        0    &   0  & d(t) & 0  \\
        \hline
        0& 0  &0  & 0 \\
      \end{array} \right). 
  \end{equation*}
Let us consider the orthogonal change of coordinates $\xbf =Q^T x$. Multiplying
\eqref{RFDAEs:exx1} by $P$ on the left we get the following equivalent equation:
\begin{equation}\label{svdeq1}
 PEQ^T\dot x(t)=P\mathcal{F}\big(Q^Tx(t)\big)+\lambda PC(t)Q^TQS(Q^Tx_t) .
\end{equation}
Set $\t E= PEQ^T$, $\t{\mathcal{F}}(x)=P\mathcal{F}(Q^Tx)$ for all $x\in\R^4$, and finally, 
put $\t{\mathcal{H}}(t,\varphi) = PC(t)Q^TQS(Q^T\varphi)$ for all 
$(t,\varphi)\in\R\X BU\big((-\infty,0],\R^4\big)$. Thus \eqref{svdeq1} can be rewritten as
\[
 \t E\dot x(t)=\t{\mathcal{F}}\big(x(t)\big)+\lambda \t{\mathcal{H}}(t,x_t).
\]
It is easily verified that $\t E$ and $\t{\mathcal{H}}$ satisfy \eqref{RFDAEs:ex0}, so that 
\eqref{svdeq1} is precisely of the form \eqref{RFDAEs:strange1}. In other words, we have 
transformed \eqref{RFDAEs:exx1}, for $E$ and $C$ as above, into an equation of the form 
considered in Proposition \ref{RADEs:prop01-nonlinearcase}.
\end{example}
  
Let us now consider Equation \eqref{RFDAEs:exx1} more in general. Let $r>0$ be the rank of 
$E$, and assume that \eqref{RFDAEs:ex1a} is satisfied. Then Lemma \ref{RFDAEs:lemma00} yields 
orthogonal matrices $P$ and $Q$ in $\R^{n\X n}$ such that, for every $t\in\R$, $PC(t)Q^T$ is 
as in \eqref{RFDAEs:continuousSVD} and realize a singular value decomposition of $E$. That is
\begin{equation}\label{RFDAEs:peq}
  PEQ^T  = \begin{pmatrix} \widetilde E_{11} 
    & 0 \\ 0 & 0 \end{pmatrix}
\end{equation}
where $\widetilde E_{11}\in \R^{r\X r}$ is a diagonal matrix with positive diagonal elements.
As in the above example, consider the orthogonal change of coordinates $\xbf =Q^T x$ in 
Equation \eqref{RFDAEs:exx1} and multiply by $P$ on the left. We get the equivalent equation
\begin{equation}\label{svdtransf}
 \t E\dot x(t)=\t{\mathcal{F}}\big(x(t)\big)+\lambda \t{\mathcal{H}}(t,x_t).
\end{equation}
where $\t E$, $\t{\mathcal{F}}$ and $\t{\mathcal{H}}$ are given by $\t E= PEQ^T$, 
$\t{\mathcal{F}}(x)=P\mathcal{F}(Q^Tx)$ for all $x\in\R^n$, and 
$\t{\mathcal{H}}(t,\varphi) = PC(t)Q^TQS(Q^T\varphi)$ for all $(t,\varphi)\in\R\X\bun$. A 
straightforward computation shows that $\t E$ and $\t{\mathcal{H}}$ satisfy conditions
\eqref{RFDAEs:ex0}. Therefore, \eqref{svdtransf} is of the form considered in Proposition 
\ref{RADEs:prop01-nonlinearcase} from which we deduce the following consequence: 

\begin{corollary}
Consider Equation \eqref{RFDAEs:exx1} where the maps $C\colon\R\to\R^{n\X n}$ and 
$S\colon\bun\to\R^n$ are continuous, $E$ is a (constant) $n\X n$ matrix, and $\mathcal{F}$ 
is such that $\mathcal{F}$ is locally Lipschitz and $S$ verifies condition \hipK{}. Suppose 
also that $C$ and $E$ satisfy \eqref{RFDAEs:ex1a} and that $C$  is $T$-periodic. Let $r>0$ be 
the rank of $E$ and assume that there exists an orthogonal basis of $\R^n\simeq\rrrn$ such that
$E$ is as in \eqref{RFDAEs:ex0}. Assume also that, relatively to this decomposition, 
$\D_2{\mathcal{F}}_2(\xi,\eta)$ is invertible for all $x=(\xi,\eta)\in\rrrn$.

Let $\Omega$ be an open subset of $[0,+\infty)\X C_T(\R^n)$ and suppose that 
$\deg(\mathcal{F},\Omega\cap\R^n)$ is well-defined and nonzero. Then, there exists 
a connected subset $\Gamma$ of nontrivial $T$-periodic pairs for \eqref{RFDAEs:exx1} 
whose closure in $\Omega$ is noncompact and meets the set 
$\big\{(0,\su{\pbf})\in \Omega:\mathcal{F}(\pbf) =0\big\}$.
\end{corollary}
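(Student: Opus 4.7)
The plan is to follow the strategy of the original proof of \cite[Corollary 5.7]{BiSp}, substituting the corrected Lemma~\ref{RFDAEs:lemma00} for the flawed \cite[Lemma 5.5]{BiSp} at the appropriate point. The argument has three main steps: an orthogonal reduction of $E$ and $C(t)$ to a block form in which the perturbation $\lambda C(t)S(\xbf_t)$ does not enter the algebraic block; the solution of the resulting algebraic constraint via the implicit function theorem; and an application of the main continuation/bifurcation theorem from \cite{BiSp} to the remaining retarded functional equation on $\R^r$.

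First, I would exploit the hypothesized orthogonal basis of $\R^n$ to put $E$ in the block form $\begin{pmatrix} E_{11} & E_{12}\\ 0 & 0\end{pmatrix}$ with $E_{11}\in\R^{r\X r}$ invertible. I would then invoke Lemma~\ref{RFDAEs:lemma00}, whose hypotheses \eqref{RFDAEs:ex1a} and \eqref{RFDAEs:ex1b} are in force by assumption, to produce orthogonal matrices $P,Q$ realizing a singular value decomposition of $E$ for which $P^TC(t)Q$ is of the form \eqref{RFDAEs:continuousSVD2}, with invertible top-left $r\times r$ block $\widetilde C_{11}(t)$ and all other blocks zero. Writing $\xbf=(\xi,\eta)\in\rrrn$ in the combined coordinates, the crucial consequence is that in the last $n-r$ equations of \eqref{RFDAEs:exx1} neither $\lambda$ nor the delay term $S(\xbf_t)$ appears: these reduce to the purely algebraic constraint $\mathcal{F}_2(\xi,\eta)=0$.

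Next, because $\partial_2\mathcal{F}_2(\xi,\eta)$ is invertible everywhere, the implicit function theorem yields a smooth function $\eta=h(\xi)$ whose graph is the solution set of the constraint. Substituting $\eta=h(\xi)$ into the remaining $r$ equations produces a $T$-periodic retarded functional equation on $\R^r$, in which the delay term $\widetilde C_{11}(t)$ composed with the projected $S$ still satisfies condition \hipK{}; this is precisely the setting of the main continuation/bifurcation theorem of \cite{BiSp}. Moreover, the degree hypothesis $\deg(\mathcal{F},\Omega\cap\R^n)\neq 0$ passes to the analogous degree for the reduced vector field, over the image of $\Omega\cap\R^n$ under the graph of $h$ composed with the two orthogonal coordinate changes.

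Finally, applying the main continuation theorem from \cite{BiSp} to the reduced system furnishes a connected branch of nontrivial $T$-periodic pairs with closure noncompact in the reduced domain and meeting the set of stationary pairs; pulling this branch back through the graph map and through the orthogonal coordinate changes produces the set $\Gamma$ asserted in the statement. The key point where \eqref{RFDAEs:ex1b} intervenes is the initial application of Lemma~\ref{RFDAEs:lemma00}: as the example in the Introduction shows, without \eqref{RFDAEs:ex1b} the block $\widetilde C_{12}(t)$ may be nonzero, and then $\lambda$ and $S(\xbf_t)$ would still enter the algebraic block, obstructing the implicit function reduction and hence the rest of the argument. This is the main technical obstacle; once the corrected lemma is in hand, the remainder is routine bookkeeping, following \cite{BiSp} essentially verbatim.
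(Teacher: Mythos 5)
The paper's own ``proof'' of Corollary~\ref{corcorretto} is the single sentence ``This result follows as in \cite{BiSp} taking into account the modified version of the lemma,'' so your overall plan --- reduce via the corrected Lemma~\ref{RFDAEs:lemma00}, solve the algebraic constraint by the implicit function theorem, and apply the main continuation theorem of \cite{BiSp} to the resulting retarded equation on $\R^r$ --- is indeed the intended route. However, your explanation of exactly where hypothesis~\eqref{RFDAEs:ex1b} intervenes is wrong, and this is not a cosmetic slip.

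You write that without~\eqref{RFDAEs:ex1b} the block $\widetilde C_{12}(t)$ may be nonzero ``and then $\lambda$ and $S(\xbf_t)$ would still enter the algebraic block, obstructing the implicit function reduction.'' But $\widetilde C_{12}$ is the \emph{top-right} block of $P^TC(t)Q$; it sits in the first $r$ rows, i.e.\ in the \emph{differential} part of the transformed system. Lemma~\ref{RFDAEs:lemma00} already shows, under~\eqref{RFDAEs:ex1a} alone, that the bottom $n-r$ rows of $P^TC(t)Q$ vanish (this is the content of the first display~\eqref{RFDAEs:continuousSVD}, and it is precisely the assertion $P_0^TU_r(t)\equiv 0$ proved there). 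Hence the last $n-r$ equations are the purely algebraic constraint $[P^T\mathcal{F}(Q y)]_2=0$ with no $\lambda$ and no $S$, irrespective of whether~\eqref{RFDAEs:ex1b} holds; indeed in the Introduction's counterexample the transformed $C$ is $\left(\begin{smallmatrix}0&1\\0&0\end{smallmatrix}\right)$, whose bottom row is still zero. What~\eqref{RFDAEs:ex1b} actually buys --- and what the original, flawed Lemma~5.5 of \cite{BiSp} incorrectly claimed to get for free --- is that $\widetilde C_{12}\equiv 0$ and $\widetilde C_{11}(t)$ is invertible. This is needed so that, after the implicit-function reduction $\eta=h(\xi)$, the remaining \emph{differential} block still has a perturbation of the clean form $\lambda\,\widetilde C_{11}(t)\,\sigma(\xi_t)$ (with $\widetilde C_{11}$ invertible) to which the structural hypothesis~\hipK{} and the main bifurcation theorem of \cite{BiSp} can be applied; if $\widetilde C_{12}\neq 0$, a cross-term $\lambda\,\widetilde C_{12}(t)[Q^TS]_2$ contaminates the reduced $r$-dimensional equation. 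You should also be more careful about the fact that the orthogonal basis putting $E$ into the form $\left(\begin{smallmatrix}E_{11}&E_{12}\\ 0&0\end{smallmatrix}\right)$ is a single similarity, whereas the lemma produces a \emph{pair} $P,Q$; your ``combined coordinates'' glosses over the bookkeeping needed to reconcile the hypothesis on $\D_2\mathcal{F}_2$ (stated in the former basis) with the algebraic constraint $[P^T\mathcal F(Qy)]_2=0$ obtained in the latter.
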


\begin{proof}
Consider the map $\h Q\colon [0,+\infty)\X C_T(\R^n)\to [0,+\infty)\X C_T(\R^n)$ given
by $\h Q(\lambda,\psi)=(\lambda,Q\psi)$. Clearly, $T$-periodic pairs of \eqref{RFDAEs:exx1} 
correspond to those of \eqref{svdtransf} under $\h Q$. The invariance under diffeomorphisms 
of the degree (or topological invariance, compare e.g.\ \cite{fupespa}) implies 
\[
\deg\big(\t{\mathcal{F}}, Q(\Omega)\cap\R^n\big)\neq 0.
\]
The assertion follows immediately applying Proposition \ref{RADEs:prop01-nonlinearcase} to 
Equation \eqref{svdtransf}. 
\end{proof}

\medskip
We conclude this section with a proof of our technical Lemma.

\begin{proof}[Proof of Lemma \ref{RFDAEs:lemma00}]
Since the dimension of $\ker C(t)$ is constantly equal to $r>0$, by inspection of the proof 
of Theorem 3.9 of \cite[Chapter 3, \S 1]{dae} we get the existence of orthogonal matrix-valued 
functions $U, V\in C\big(\R, \R^{n\X n}\big)$ and $C_r\in C(\R, \R^{r\X r})$ such that, for all 
$t\in\R$, $\det C_r(t)\neq 0$ and
\begin{equation}\label{svdcont}
    U^T(t)C(t)V(t) = 
        \begin{pmatrix}
         C_r(t) & 0 \\ 
           0 & 0 
        \end{pmatrix}.
\end{equation}  

Let $U_r,V_r\in C\big(\R,\R^{n\X r}\big)$ and $U_0,V_0\in C(\R,\R^{n\X (n-r)})$ be matrix-valued
functions formed, respectively, by the first $r$ and $n-r$ columns of $U$ and $V$. 
A simple argument involving Equation \eqref{svdcont} shows that the columns of $V_0(t)$, $t\in\R$, 
are in $\ker C(t)$ and, since there are $n-r=\dim\ker C(t)$ of them, we have that the columns of 
$V_0(t)$ actually span $\ker C(t)$. In fact, the orthogonality of the matrix $V(t)$, $t\in\R$, 
imply that the columns of $V_0(t)$ form an orthogonal basis of $\ker C(t)$. A similar argument
proves that the columns of $U_0(t)$ are vectors of $\R^n$ that constitute an orthogonal basis of 
$\ker C(t)^T$ for all $t\in\R$. Observe also that since $\im C(t)$ is orthogonal to $\ker C(t)^T$ 
for all $t\in\R$, it follows that the columns of $U_r(t)$ are an orthogonal basis for $\im C(t)$ 
and that those of $V_r(t)$ so are for $\im C(t)^T$.

Similarly, let $P_r,Q_r$ and $P_0,Q_0$ be the matrices formed taking the first $r$ and $n-r$ 
columns of $P$ and $Q$, respectively. Since $P$ and $Q$ realize a singular value decomposition 
of $E$, one can check that the columns of $P_r$, $Q_r$, $P_0$ and $Q_0$ span $\im E$, $\im E^T$, 
$\ker E^T$, and $\ker E$, respectively.

We claim that $P_0^TU_r(t)$ is constantly the null matrix. To prove this, it is enough to 
show that for all $t\in\R$, the columns of $P_0$ are orthogonal to those of $U_r(t)$. Let $v$ 
and $u(t)$, $t\in\R$, be any column of $P_0$ and of $U_r(t)$, respectively. Since for all $t\in\R$ 
the columns of $U_r(t)$ are in $\im C(t)$, there is a vector $w(t)\in\R^n$ with the property that 
$u(t)=C(t)w(t)$, and 
\begin{equation*}
    \langle v, u(t)\rangle= \langle v, C(t)w(t)\rangle= 
    \langle C(t)^Tv, w(t)\rangle=0,\quad t\in\R,
\end{equation*}
because $v\in\ker E^T=\ker C(t)^T$ for all $t\in\R$. This proves the claim. A similar argument 
shows that also $P_r^TU_0(t)$, $V_r^T(t)Q_0$, and $V_0^T(t)Q_r$ are also identically zero.

Since for all $t\in\R$
\[
 P^TQ
=\begin{pmatrix}
                        P_r^TU_r(t) & 0 \\ 
                             0 & P_0^TU_0(t) 
                  \end{pmatrix}\quad
\text{ and }\quad
 V(t)^TQ
=\begin{pmatrix}
                        V_r(t)^TQ_r & 0 \\ 
                             0 & V_0(t)^TQ_0 
                  \end{pmatrix}
\]
are nonsingular, we deduce in particular that so are $P_r^TU_r(t)$ and $V_r(t)^TQ_r$.

Let us compute the matrix product $P^TC(t)Q$ for all $t\in\R$. We omit here, for the sake of 
simplicity, the explicit dependence on $t$. 
  \begin{equation*}
    \begin{aligned}
      P^TCQ &= P^TUU^TCVV^T Q
            =\begin{pmatrix}
                        P_r^TU_r & 0 \\ 
                             0 & P_0^TU_0 
                  \end{pmatrix}
        \begin{pmatrix}
                        C_r & 0 \\ 
                             0 & 0 
                  \end{pmatrix}\begin{pmatrix}
                        V_r^TQ_r & 0 \\ 
                             0 & V_0^TQ_0 
                  \end{pmatrix}\\
      &= \begin{pmatrix} P_r^TU_r C_r V_r^TQ_r & 0 \\
                                             0 & 0 
         \end{pmatrix}.
    \end{aligned}
  \end{equation*}
Which proves the assertion because  $P_r^TU_r$, $C_r$, and $V_r^TQ_r$ are nonsingular.
\end{proof}

\end{document}